\title{\LARGE \bf
Radially Symmetric Mean-Field Games with Congestion
}
\author{David Evangelista, Diogo A. Gomes and Levon Nurbekyan
}
\begin{document}

\maketitle
\thispagestyle{empty}
\pagestyle{empty}

\begin{abstract}
	Here, we study radial solutions for first- and second-order stationary Mean-Field Games (MFG) with congestion on $\Rr^d$.
	MFGs with congestion model problems where the agents' motion is hampered in high-density regions. The radial case, which is one of the simplest non one-dimensional MFG, is relatively tractable. As we observe in this paper, the Fokker-Planck equation is integrable with respect to one of the unknowns. Consequently, we obtain a single  equation substituting this solution into the Hamilton-Jacobi equation.   
	For the first-order case, we derive explicit formulas; for the elliptic case, we study a variational formulation of the resulting equation.
	In both cases, we use our approach to compute numerical approximations to the solutions of the corresponding MFG systems.
%
%
%
%
%
%
%
%
%
%
\end{abstract}

\section{Introduction}

Mean-field games are models for large populations of competing rational agents
that were introduced in \cite{Caines1} and \cite{Caines2} and, independently, around the same time in \cite{ll1}, \cite{ll2}, and \cite{ll3}. 
 Usually, these games are determined by a system of a Hamilton-Jacobi equation coupled with a Fokker-Planck equation. Partial differential equations (PDE), and in particular systems, seldom admit explicit solutions. Thus, PDE theory focuses on matters such as the uniqueness, 
 existence, and regularity of solutions, see, for example, \cite{GPV}. 
 For MFGs, few explicit solutions are known in spite of their interest 
 in understanding the qualitative properties of the models and for the validation of numerical methods. A discussion on explicit solutions (as well as appropriate references) can be found in  \cite{GPV}. Some recent 
 results for one-dimensional problems are presented in \cite{GNPr'16}, \cite{GNPr2'16} and \cite{nurbekyan'17}. 

Here, we consider MFG models for which the Fokker-Planck equation is integrable with respect to one of the unknowns. This integrability reduces the system to a single equation that often can be solved explicitly.
More precisely, we study the following models.


\begin{problem}[First order with congestion]\label{P1}
	Assume that $0\leq \alpha<2$. Moreover, let $V\in C^{\infty}(\Omega)$ and $g \in C^{\infty}(\Rr^+)$ be given potential and coupling, respectively. Then, find $(u,m,\Hh)\in C^{\infty}(\Omega)\times C^{\infty}(\Omega) \times \Rr$ such that
	\begin{equation}\label{mainfo}
	\begin{cases}
	\frac{|Du|^2}{2 m^\alpha}+ V(x)=  g(m)+\Hh & \mbox{ in } \Omega \\
	-\div(m^{1-\alpha}Du)=0 \quad &\mbox{ in } \Omega\\
	m>0,\ \int\limits_{\Omega} m(x)dx=1.
	\end{cases}
	\end{equation}
\end{problem}
\begin{problem}[Second order with congestion]\label{P2}
	Assume that $0\leq \alpha<2$. Moreover, let $V\in C^{\infty}(\Omega)$ and $g \in C^{\infty}(\Rr^+)$ be given potential and coupling, respectively. Then, find $(u,m,\Hh)\in C^{\infty}(\Omega)\times C^{\infty}(\Omega) \times \Rr$ such that
	\begin{equation}\label{mainso}
	\begin{cases}
	-\Delta u+\frac{|Du|^2}{2 m^\alpha}+ V(x)=  g(m)+\Hh & \mbox{ in } \Omega \\
	-\Delta m-\div(m^{1-\alpha}Du)=0 \quad &\mbox{ in } \Omega\\
	m>0,\ \int\limits_{\Omega} m(x)dx=1.
	\end{cases}
	\end{equation}
\end{problem}
In Problems \eqref{P1} and \eqref{P2}, we consider $\Omega=\Rr^d$ or $\Omega=\Rr^d \setminus\{0\}$ for $d\geq 2$. The properties of solutions of previous problems strongly depend on the choice of $\Omega$.

MFGs with congestion arise when agents face increasing costs in moving with an increasing density. For example, the Hamilton-Jacobi equation in \eqref{mainfo}
corresponds to the following control problem:
\begin{align*}
u(x)
&=
\inf\Big[
\int_0^T 
\Big(
\frac{m^\alpha(\bx(t))|\bdx(t)|^2}{2}\\&\qquad\quad
-V(\bx(t)) +g(m(\bx(t)))+\Hh\Big)dt\\&\quad+u(\bx(T))\Big], 
\end{align*}
where the infimum is taken over all Lipschitz trajectories, $\bx$ with 
$\bx(0)=x$. The preceding variational principle is a fixed-point
problem for $u$ because we are looking for a stationary solution rather than a 
time-dependent.  

The congestion problem was introduced in \cite{LCDF}.
The existence for stationary MFGs  with congestion, positive viscosity, and a quadratic Hamiltonian was solved in \cite{GMit} and, subsequently, this problem was examined in more generality in \cite{GE16}. The time-dependent case was considered in \cite{GVrt2} (classical solutions) 
and \cite{Graber2} (weak solutions). Later, \cite{AP} examined weak solutions for time-dependent problems. 
Apart from the results in \cite{FG2} using monotonicity techniques and the one-dimensional examples in \cite{GNPr2'16} and \cite{nurbekyan'17}, little is known for first-order MFGs with congestion. 

Radial MFGs arise when there is rotational symmetry with respect to a 
central point, here the origin. For monotone MFGs, where there is
uniqueness of solution, the unique stationary solution is radial. 
Moreover, we expect stationary solutions to encode the long-time behavior of MFGs. More precisely, the time-dependent problem associated with Problem \ref{P1} is 
\begin{equation}\label{mainfotd}
\begin{cases}
-u_t+\frac{|Du|^2}{2 m^\alpha}+ V(x)=  g(m)+\Hh & \mbox{ in } \Omega \\
m_t-\div(m^{1-\alpha}Du)=0 \quad &\mbox{ in } \Omega\\
m>0,\ \int\limits_{\Omega} m(x,t)dx=1.
\end{cases}
\end{equation}
The previous system is supplemented with initial-terminal conditions
\begin{equation}
\begin{cases}
u(x,T)=\mu(x)\\
m(x,-T)=\eta(x).
\end{cases}
\end{equation}
Then, we expect that the effect
upon the solution at time zero, $(u(x,0), m(x,0))$,
from the initial distribution, $\eta$, and from the terminal cost, $\mu$,
fades as $T\to \infty$. Thus, we expect $(u(x,0), m(x,0))$ to converge to a stationary solution. Some results in this direction can be found in 
the finite-state case in \cite{GMS2} and in the continuous case in \cite{CLLP}
and \cite{cllp13}. For the convergence to hold, the MFG must satisfy 
monotonicity conditions and a counterexample to convergence can be found in 
\cite{GSd}.

We analyze Problems \ref{P1} and \ref{P2} for a radially symmetric potential, $V$. In particular, we find explicit solutions for Problem \ref{P1} and reduce Problem \ref{P2} to an ODE that is the Euler-Lagrange equation of a convex functional when $\alpha=0$.

Usually, we regard the Hamilton-Jacobi equation as an equation for the value function, $u$, and the Fokker-Planck equation as an equation for the density, $m$. Unfortunately, the Hamilton-Jacobi equation is notoriously hard to solve
for the value function and, similarly, the Fokker-Planck equation, as an equation for $m$, also looks somewhat hopeless. 
Our key observation is that, instead, we can regard the Fokker-Planck equation as an equation for the value function, $u$.
A common feature in our analysis of Problems \ref{P1}, \ref{P2}
 is that the Fokker-Planck equations in \eqref{mainfo} and \eqref{mainso} 
 are integrable with respect to $u$.
Moreover, these equations have fairly explicit solutions. Thus, using their solutions in the corresponding Hamilton-Jacobi equation, we transform the MFG in a scalar problem for the density, $m$. This approach is inspired by \cite{GNPr'16}, \cite{GNPr2'16}, \cite{nurbekyan'17}.

\section{Radial solutions for first-order mean-field games}\label{sec:P1'}

Here, we find explicit solutions for Problem \ref{P1}, when $V$ is radially symmetric. For $x\in \Rr^d$, let $r=|x|=(x_1^2+x_2^2+\ldots +x_d^2)^{1/2}$ and assume that $V(x)=V(r),\ x\in B_{1}$. For simplicity, we assume that
\begin{equation*}
g(m)=m^\beta
\end{equation*}
for some $\beta>0$. This previous assumption is not critical, and a similar analysis can be performed for other choices of $g$. The game-theoretical interpretation of MFGs suggests that any solution of Problem \ref{P1}, $(u,m)$, should be radially symmetric. Hence, we assume that
\begin{equation}\label{radialvar}
\begin{cases}
&u(x)=u(r),\ x\in B_{1}\\
&m(x)=m(r),\ x\in B_{1}.
\end{cases}
\end{equation}
Consequently, \eqref{mainfo} takes the form

\begin{equation}\label{mainradialfo}
\begin{cases}
&\frac{u'(r)^2}{2 m(r)^{\alpha}} + V(r)=m(r)^\beta+\Hh,\ \mbox{for}\ r\in\Omega^*\\
&\\
&(1-\alpha) m'(r) u'(r)+m(r)(u''(r)\\
&+\frac{d-1}{r}u'(r))=0,\ \mbox{for}\ r\in \Omega^*\\
&\\
&m(r)>0,\ \mbox{for}\ r\in \Omega^*,\\
&\int\limits_{0}^\infty r^{d-1}m(r)dr=\frac{1}{|\partial B_1|},
\end{cases}
\end{equation}
where
\begin{equation}\label{eq:omega*}
\Omega^*=
\begin{cases}
[0,\infty),\ \mbox{if} \ \Omega=\Rr^d\\
(0,\infty),\ \mbox{if} \ \Omega=\Rr^d \setminus \{0\}.
\end{cases}
\end{equation}

\begin{ART}
	{"FirstOrder3DimensionalRadial",
	MKF := Function[#1, #2] &;
	PDEtoRadial[PDE_, DepVars_, IndVars_] :=
	Module[{pdeop}, 
	pdeop = MKF[DepVars, PDE];
	First@Assuming[r > 0, 
	pdeop @@ (MKF[IndVars, #@Sqrt[Total[#^2 & /@ IndVars]]] & /@ 
	DepVars) /. 
	Solve[Total[#^2 & /@ IndVars] == r^2, First[IndVars]] // 
	Simplify]
	];
	RadialSolutionsPDE[PDE_, DepVars_, IndVars_] :=	
	DSolve[# == 0 & /@ PDEtoRadial[PDE, DepVars, IndVars], # @@ {r} & /@
	DepVars, {r}];
				
	Clear[H, u, x,y,z];			
	PDEtoRadial[
	{(D[u[x, y, z], x]^2 + D[u[x, y, z], y]^2 + 
		D[u[x, y, z], z]^2)/(2 m[x, y, z]^ a) + 
		V[Sqrt[x^2 + y^2 + z^2]] - g[m[x, y, z]] - H, 
		-Div[{D[u[x, y, z], x] m[x, y, z]^(1 - a), 
		D[u[x, y, z], y] m[x, y, z]^(1 - a), 
		D[u[x, y, z], z] m[x, y, z]^(1 - a)}, {x, y, z}]}, {u, m}, {x, y, z}  ]
}
\end{ART}

\begin{pro}\label{pro:reduction}
	Suppose that $\Omega=\Rr^d$ or $\Omega=\Rr^d\setminus\{0\}$. Furthermore, assume that $(u,m,\Hh)\in C^{\infty}(\Omega)\times C^{\infty}(\Omega) \times \Rr$ is a radially symmetric solution for \eqref{mainfo}; that is, \eqref{radialvar} holds. Then, there exists a constant $j \in \Rr$ such that
	\begin{equation}\label{eq:reduction}
	j=u'(r)m(r)^{1-\alpha}r^{d-1}\quad \mbox{for all}\quad r\in \Omega^*.
	\end{equation}
\end{pro}
\begin{remark}
	The right-hand side of \eqref{eq:reduction} is the current of the population across the sphere of radius $r$. Therefore, the previous proposition asserts that the current is constant for all radii $r$. This property of smooth solutions is also valid in one-dimensional MFG models, see \cite{GNPr'16}, \cite{GNPr2'16}, \cite{nurbekyan'17}.
\end{remark}
\begin{proof}[Proof of Proposition \ref{pro:reduction}]
	It suffices to note that
	\begin{align*}
	&\left(m(r)^{1-\alpha}r^{d-1}u'(r)\right)'=m^{-\alpha}r^{d-1}[  (1-\alpha) m'(r) u'(r)\\
	&+m(r)(u''(r)+\frac{d-1}{r}u'(r)) ]=0.
	\end{align*}
\end{proof}
\begin{ART}
	{"ProofPropositionII1",
(*	Clear["Global`*"] *)
{D[m[r]^(1 - \[Alpha]) r^(d - 1) u'[r], r], D[m[r]^(1 - \[Alpha]) r^(d - 1) u'[r], r]/(r^(d-1)m[r]^(-\[Alpha]) )//Simplify}						
		}
\end{ART}
As a corollary of Proposition \ref{pro:reduction}, we characterize smooth solutions for \eqref{P1} in $\Omega=\Rr^d$.
\begin{pro}\label{pro:smoothfo}
	Suppose that $\Omega=\Rr^d$. Furthermore, assume that $(u,m,\Hh)\in C^{\infty}(\Omega)\times C^{\infty}(\Omega) \times \Rr$ is a radially symmetric solution for \eqref{mainradialfo}. Then, necessarily,
	\begin{equation}\label{eq:solsR^d}
	u=\mbox{const},\quad m(r)=\left(V(r)-\Hh\right)^{\frac{1}{\beta}},
	\end{equation}
	where $\Hh$ is such that
	\begin{equation}\label{eq:int=1}
	\int\limits_{0}^\infty r^{d-1}\left(V(r)-\Hh\right)^{\frac{1}{\beta}}dr=\frac{1}{|\partial B_1|}.
	\end{equation}
	Consequently, if for a given $V$ there does not exist $\Hh$ so that \eqref{eq:int=1} holds, there are no smooth solutions for \eqref{mainradialfo}.
\end{pro}
\begin{proof}
	By Proposition \ref{pro:reduction}, we have that $u$ and $m$ satisfy \eqref{eq:reduction}. Furthermore, $u$ is radially symmetric and smooth so $u'(0)=0$. Thus, $j=0$ in \eqref{eq:reduction}. Consequently, $u'(r)=0$ for all $r$ and we obtain \eqref{eq:solsR^d}.
\end{proof}
\begin{example}
	Suppose $d=2,\ \beta=1$ and $V(r)=e^{-\frac{r^2}{2}}$. We claim that in this case, \eqref{P1} does not admit smooth, radially symmetric solutions in $\Omega=\Rr^d$. Indeed, by the previous proposition,  $m$ must have the form \eqref{eq:solsR^d}. Furthermore, the only number $\Hh$ for which the integral in \eqref{eq:int=1} converges is $\Hh=0$. On the other hand, we have
	\[
	\int\limits_{0}^\infty r e^{-\frac{r^2}{2}}dr=1 \neq \frac{1}{2\pi}=\frac{1}{|\partial B_1|}.
	\]
	Therefore, there are no smooth solutions in this case.
\end{example}

The previous example and Proposition \ref{pro:smoothfo} assert that the class of smooth, radially symmetric solutions for \eqref{mainfo} is restricted to the solutions of the form \eqref{eq:solsR^d}. This restriction is not surprising taking into account game-theoretical interpretation of MFGs. Indeed, agents seek to maximize $V$ (to be at a better location) at the lowest possible cost (modeled by the Lagrangian). Therefore, if the spatial preference depends only on the distance from a given center (origin here) agents choose to move only in the radial direction to avoid extra cost of moving in lateral directions. Moreover, the value function, $u$, is differentiable at $x\in \Omega$ if at location $x$ there is a unique optimal strategy. Therefore, if $\Omega=\Rr^d$ then $u$ is smooth at 0 if and only if the optimal strategy for agents at the origin is to stand still. Furthermore, the Fokker-Planck equation in \eqref{mainfo} states that the total mass of the agents in a given area does change as a result of the actions of the agents. Hence, the current of agents in two concentric spheres with two different radii is the same - otherwise, there will be an accumulation or leakage of agents in the spherical strip between these two spheres. Thus, if the current is 0 at the origin, it is 0 everywhere. Hence, the only possibility for smooth radial solutions in $\Rr^d$ (or a ball centered at 0) is when $u$ is constant; that is, agents do not move.

As we describe below, the class of smooth solutions for $\Omega=\Rr^d \setminus \{0\}$ is much richer. The reason is that the current which is still constant in this case does not have to be 0. One may think of this as agents coming from the origin and spreading to $\infty$ or vice versa.

Radially symmetric MFGs are relevant in large-population models where the spatial preferences of agents depend only on the distance from a given location. For instance, in an evenly developed city, the desirability of a house frequently depends on its distance from the downtown or city center. It is particularly interesting whether stationary solutions found here are stable in a sense given earlier.

Next, we study radially symmetric solutions for \eqref{P1} in $\Omega=\Rr^d \setminus\{0\}$. For that, 
let
\begin{equation}\label{eq:Fj}
F_j(t)=\frac{j^2}{2}t^{\frac{\alpha-2}{\beta}}-t,\quad t>0.
\end{equation}

Note that $F_j$ is a decreasing function with $F_j(\Rr^+)=\Rr$.
\begin{pro}
	Suppose that $\Omega=\Rr^d \setminus \{0\}$ and that $V$ is such that 
	\begin{equation}\label{eq:Vdecay}
	\lim\limits_{r\to 0}V(r)r^{\frac{2\beta(d-1)}{2+\beta-\alpha}}=\lim\limits_{r\to \infty}V(r)r^{\frac{2\beta(d-1)}{2+\beta-\alpha}}=0.
	\end{equation}
	Also, assume that
	\begin{equation}\label{eq:alphabounds}
	\frac{2}{d}<\alpha<\min\{2,\frac{2}{d}+\beta\}.
	\end{equation}
	Furthermore, let $(u,m,\Hh)\in C^{\infty}(\Omega)\times C^{\infty}(\Omega) \times \Rr$ be a radially symmetric solution for \eqref{mainradialfo}. Then, either $(u,m,\Hh)$ are given by \eqref{eq:solsR^d} and \eqref{eq:int=1} or
	\begin{align}\label{eq:solsR^d-0}
	\begin{split}
	m(r)&=r^{-\frac{2(d-1)}{2+\beta-\alpha}}\left[F_j^{-1}\left((\Hh-V(r))r^{\frac{2\beta(d-1)}{2+\beta-\alpha}}\right)\right]^{\frac{1}{\beta}},\\ u(r)&=j\int\limits_1^r m(s)^{1-\alpha} s^{1-d} ds+c,\quad r>0
	\end{split}
	\end{align}
	where $j\neq 0$ and $c$ are arbitrary constants, and $\Hh$, which is unique for a given $j$, is such that

	\begin{align}\label{eq:int=1-0}
	\begin{split}
	\int\limits_{0}^\infty &r^{\frac{(d-1)(\beta-\alpha)}{2+\beta-\alpha}}\left[F_j^{-1}\left((\Hh-V(r))r^{\frac{2\beta(d-1)}{2+\beta-\alpha}}\right)\right]^{\frac{1}{\beta}}dr\\
	&=\frac{1}{|\partial B_1|}.
	\end{split}
	\end{align}
\end{pro}
\begin{proof}
	By Proposition \ref{pro:reduction}, we have that $u$ and $m$ satisfy \eqref{eq:reduction}. If $j=0$, we obtain \eqref{eq:solsR^d} and \eqref{eq:int=1}. Suppose $j\neq 0$. From \eqref{eq:reduction} we get
	\begin{equation*}
	u'(r)=j\frac{m(r)^{\alpha-1}}{r^{d-1}},
	\end{equation*}

	Substituting the expression of $u'$ in the first equation of \eqref{mainradialfo}, we obtain
	\begin{equation}\label{singeqvfo}
	\frac{j^2}{2}r^{2(1-d)}m(r)^{\alpha-2}-m(r)^\beta=\Hh-V(r).
	\end{equation}
	Let
	\begin{equation}\label{eq:sigma}
	\sigma=\frac{2\beta(d-1)}{2+\beta-\alpha}.
	\end{equation}
	Then, multiplying \eqref{singeqvfo} by $r^\sigma$, we get
	\begin{equation*}
	F_j\left(r^\sigma m(r)^\beta\right)=(\Hh-V(r))r^\sigma.
	\end{equation*}
	\begin{ART}
		{"ComputationEpxressionII11  ",
			(* Clear["Global`*"] *)
			Clear[F];
			sigma = (2 \[Beta] (d - 1))/(2 + \[Beta] - \[Alpha]);
			F = j^2/2 #^((\[Alpha] - 2)/\[Beta]) - # &;
			qq=PowerExpand[F[r^sigma m[r]^\[Beta]]];
			Clear[F];
			qq
			(* Clear[\[Sigma]] *)
			}
	\end{ART}	
	Thus, we obtain \eqref{eq:solsR^d-0}.
	
	Now, we show that there exists a unique $\Hh$ such that \eqref{eq:int=1-0} holds.
	For $\Hh \in \Rr$, let 
	\begin{equation}\label{eq:phiH}
	\begin{split}
	&\phi (\Hh)=\\
	& \int\limits_{0}^\infty r^{\frac{(d-1)(\beta-\alpha)}{2+\beta-\alpha}}\left[F_j^{-1}\left((\Hh-V(r))r^{\frac{2\beta(d-1)}{2+\beta-\alpha}}\right)\right]^{\frac{1}{\beta}}dr.
	\end{split}
	\end{equation}
	\begin{ART}
	{"rExponentIntegrandExpressionII13",
		ss = (2 (d - 1))/(2 + \[Beta] - \[Alpha]);
		-ss + d - 1 // Simplify
		}
	\end{ART}
	It is straightforward to check that if \eqref{eq:Vdecay} and \eqref{eq:alphabounds} hold, then $\phi(\Hh)<\infty$ if and only if $\Hh> 0$. Moreover, since $F_j$ is decreasing, $F_j^{-1}$ is also decreasing. Therefore, by the Monotone Convergence Theorem,  $\phi(\Hh)$ is decreasing, continuous, and 
	\begin{align*}
	&\lim\limits_{\Hh\to 0+}\phi(\Hh)=+\infty,\\
	&\lim\limits_{\Hh\to \infty}\phi(\Hh)=\\
	&\int\limits_{0}^\infty\lim\limits_{\Hh\to \infty} r^{\frac{(d-1)(\beta-\alpha)}{2+\beta-\alpha}}\left[F_j^{-1}\left((\Hh-V(r))r^{\frac{2\beta(d-1)}{2+\beta-\alpha}}\right)\right]^{\frac{1}{\beta}}dr\\
	&=0.
	\end{align*}
	Thus, there exists a unique $\Hh$ such that $\phi(\Hh)=\frac{1}{|\partial B_1|}$.
\end{proof}

The larger is the potential at a reference location the more desirable is this location. Therefore, if $V$ is not small enough at $\infty$ agents may be attracted to $\infty$ too much, and we may end up having infinite mass. Similarly, if $V$ is too large near the origin there may be too many agents near the origin, and this may also yield infinite mass. Hence, $V$ must be small enough at the infinity and the origin. These considerations are quantified in \eqref{eq:Vdecay}.

\subsection{Numerical solutions}
Here, we numerically solve Problem \eqref{P1} using \eqref{mainradialfo}.  
We choose $d=2$, the coupling $g(m) = m$ and current $j=1$. We perform two experiments with the congestion exponent $\alpha$ varying in the set $\alpha\in\{ 1.3, 1.4, 1.5, 1.6\}$. 

First, we choose the case with potential $V(x)=e^{-\frac{|x|^2}{2}}\sin\left(\pi(|x|+\frac 1 4)\right)$ as in Figure \ref{fig:plotV}. For this case, in Figure \ref{fig:plotPhi}, we illustrate the behavior of $\phi$ given by \eqref{eq:phiH}, where the integral computed over the interval $[0,100]$.
For this example, for each $\alpha\in\{ 1.3, 1.4, 1.5, 1.6\}$, the value of $\Hh$ for which $\phi(\Hh)=\frac{1}{|\partial B_1|}$, is approximately $\{13.48, 15.99, 25.05, 62.26\}$, respectively.
Moreover, the value function $u$ and the density $m$ are illustrated in \ref{fig:plotu} and \ref{fig:plotm}, respectively, for each $\alpha$ and corresponding $\Hh$.

Next, we choose the potential $V(x)=(1+|x|)^{-\frac 3 2}\sin\left(2\pi(|x|+\frac{1}{4})\right)$ (see Figure \ref{fig:plotVPowerSin}).
We choose the same congestion exponents, $\alpha$. The corresponding $\Hh$'s for this case are approximately $\{13.47, 15.95, 25, 62.20\}$, respectively.

\begin{IMG}
	Clear[F];
	Get[CloudObject["WorkSpaceExpVSin5000Corrected"]];
\end{IMG}

	\begin{IMG}
	{"plotPhi", 
	GraphicsRow[{Plot\[Phi]}, ImageSize -> Large]
	}
	\end{IMG}

	\begin{IMG}
	{"plotu", 
	GraphicsRow[{Plotu}, ImageSize -> Large]
	}
	\end{IMG}

	\begin{IMG}
	{"plotV", 
	GraphicsRow[{PlotV}, ImageSize -> Large]
	}
	\end{IMG}	

	\begin{IMG}
	{"plotm", 
	GraphicsRow[{Plotm}, ImageSize -> Large]
	}
	\end{IMG}	
		
	\begin{IMG}
	{"clear",
	Clear[PlotmV,Plotu,Plot\[Phi],u,m,x,H]
		}
	\end{IMG}

\begin{figure}[htb]
\centering
\begin{subfigure}[b]{\sizefigure\textwidth}
\includegraphics[width=\textwidth]{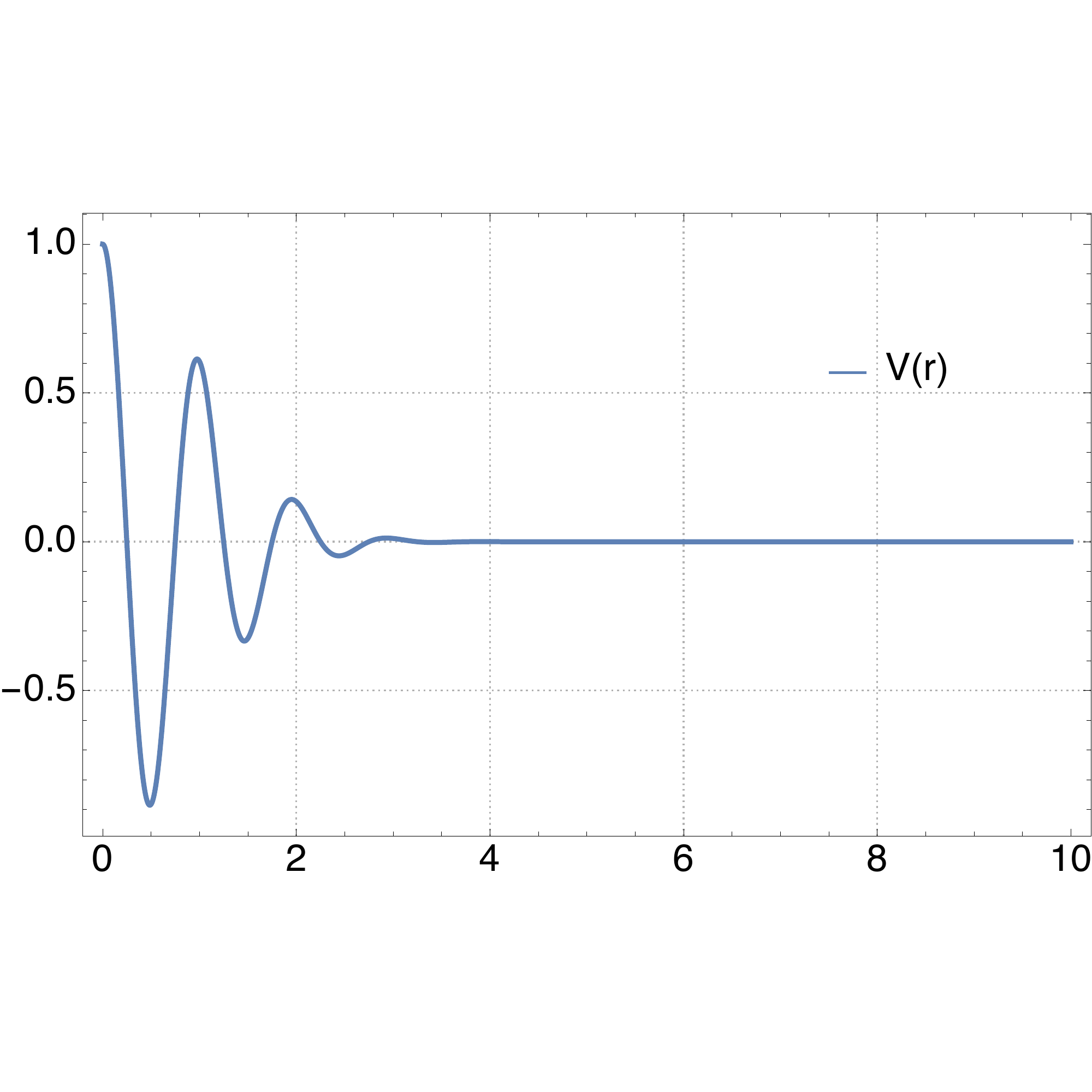}
\caption{The potential $V$}
\label{fig:plotV}
\end{subfigure}
\begin{subfigure}[b]{\sizefigure\textwidth}
\includegraphics[width=\textwidth]{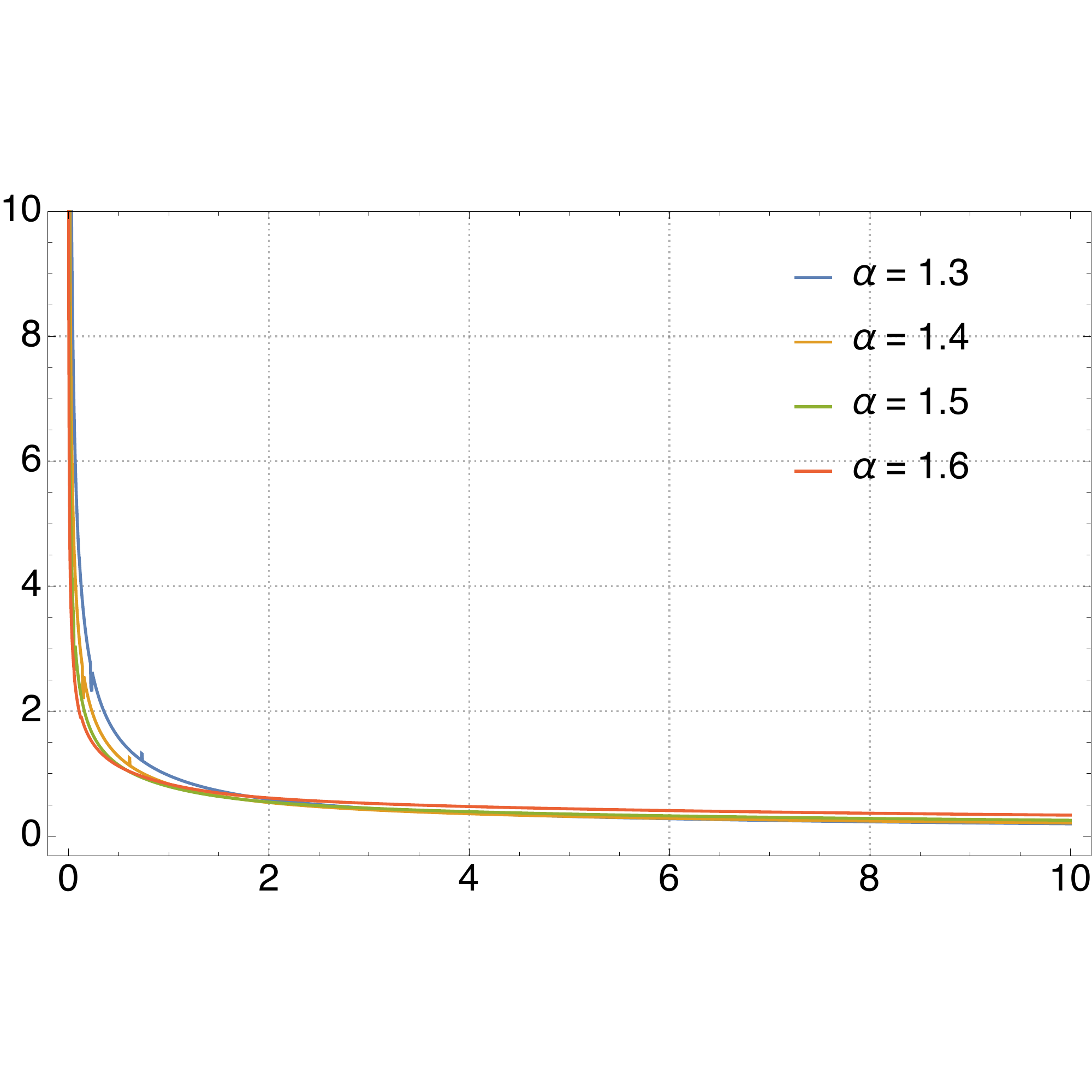}
\caption{$\phi$ defined in \eqref{eq:phiH}}
\label{fig:plotPhi}
\end{subfigure}
\begin{subfigure}[b]{\sizefigure\textwidth}
\includegraphics[width=\textwidth]{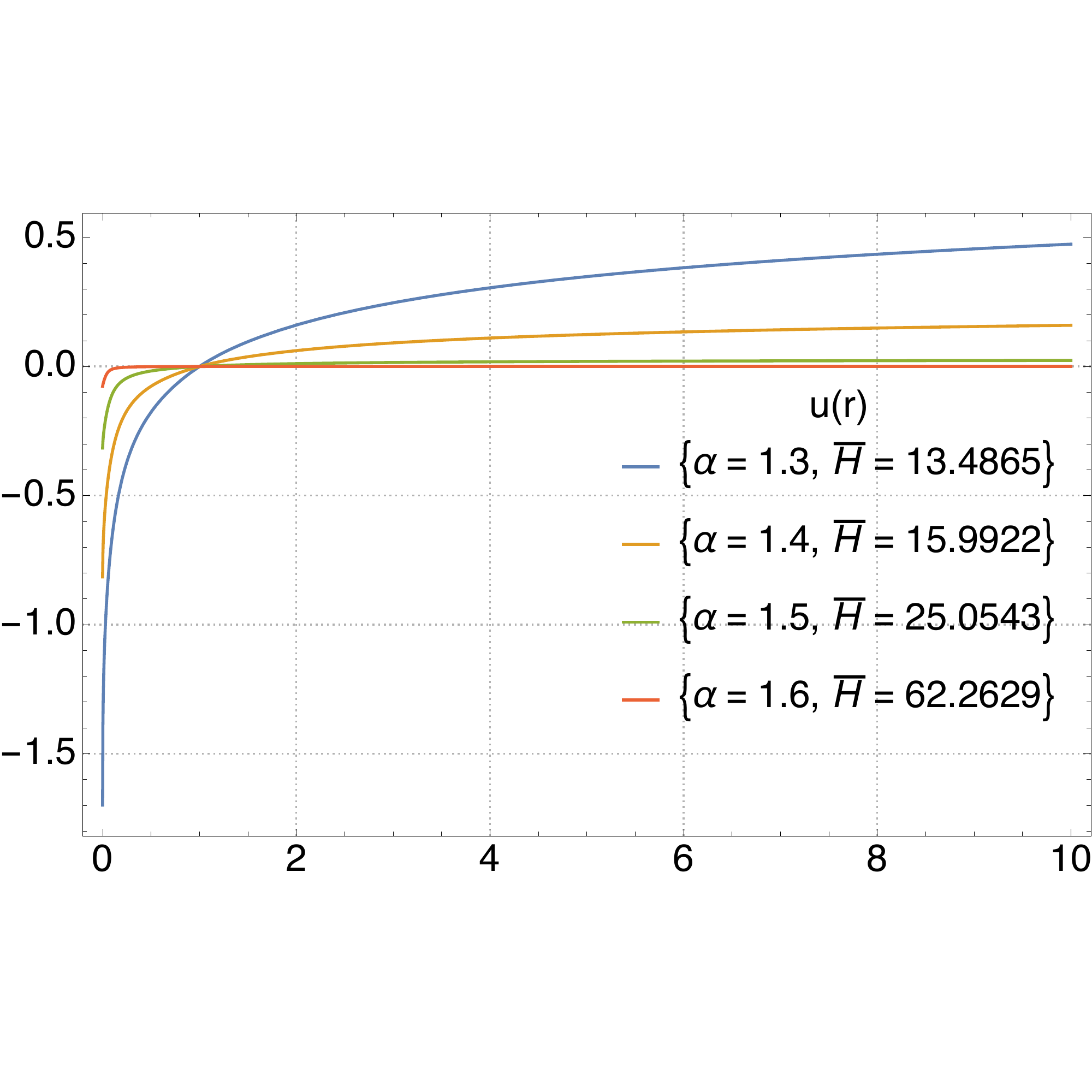}
\caption{The value function $u$}
\label{fig:plotu}
\end{subfigure}
\begin{subfigure}[b]{\sizefigure\textwidth}
\includegraphics[width=\textwidth]{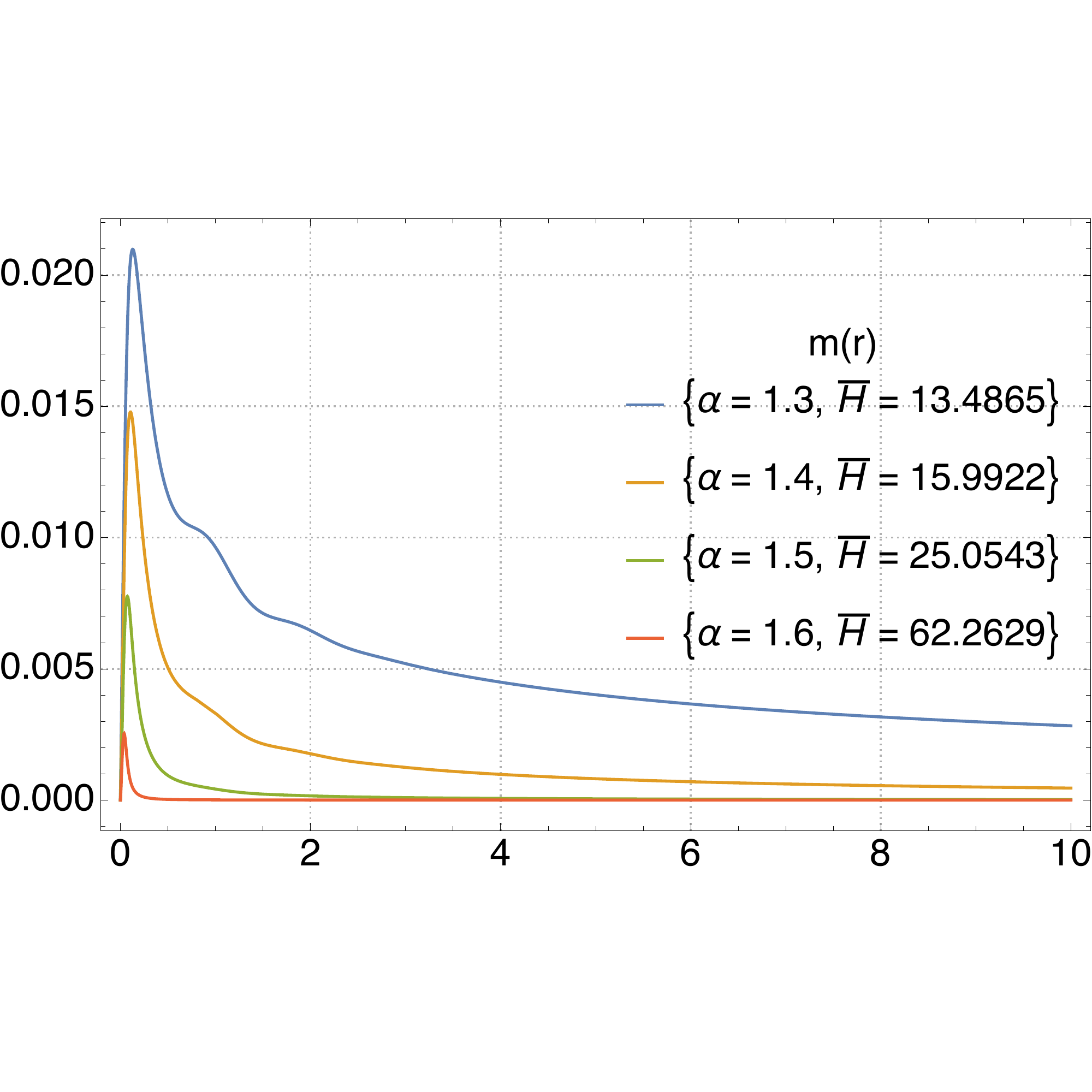}
\caption{The density $m$}
\label{fig:plotm}
\end{subfigure}
~ 
\caption{Numerical solution of Problem \ref{P1} for $d=2$, $g(m) = m$, $j=1$, $V(x)= e^{-\frac{|x|^2}{2}} \sin\left(2\pi(|x|+\frac{1}{4})\right)$ and  $\alpha\in\{ 1.3, 1.4, 1.5, 1.6\}$.}\label{fig:solFirstOrderMFG}
\end{figure}

\begin{IMG}
Clear[F];
Get[CloudObject["WorkSpacePowerVSin"]];
\end{IMG}

\begin{IMG}
{"plotPhiPowerSin", 
	GraphicsRow[{Plot\[Phi]}, ImageSize -> Large]
}
\end{IMG}

\begin{IMG}
{"plotuPowerSin", 
	GraphicsRow[{Plotu}, ImageSize -> Large]
}
\end{IMG}

\begin{IMG}
{"plotVPowerSin", 
	GraphicsRow[{PlotV}, ImageSize -> Large]
}
\end{IMG}	

\begin{IMG}
{"plotmPowerSin", 
	GraphicsRow[{Plotm}, ImageSize -> Large]
}
\end{IMG}	

\begin{IMG}
{"clear",
	Clear[PlotmV,Plotu,Plot\[Phi],u,m,x,H]
}
\end{IMG}

\begin{figure}[htb]
\centering
\begin{subfigure}[b]{\sizefigure\textwidth}
\includegraphics[width=\textwidth]{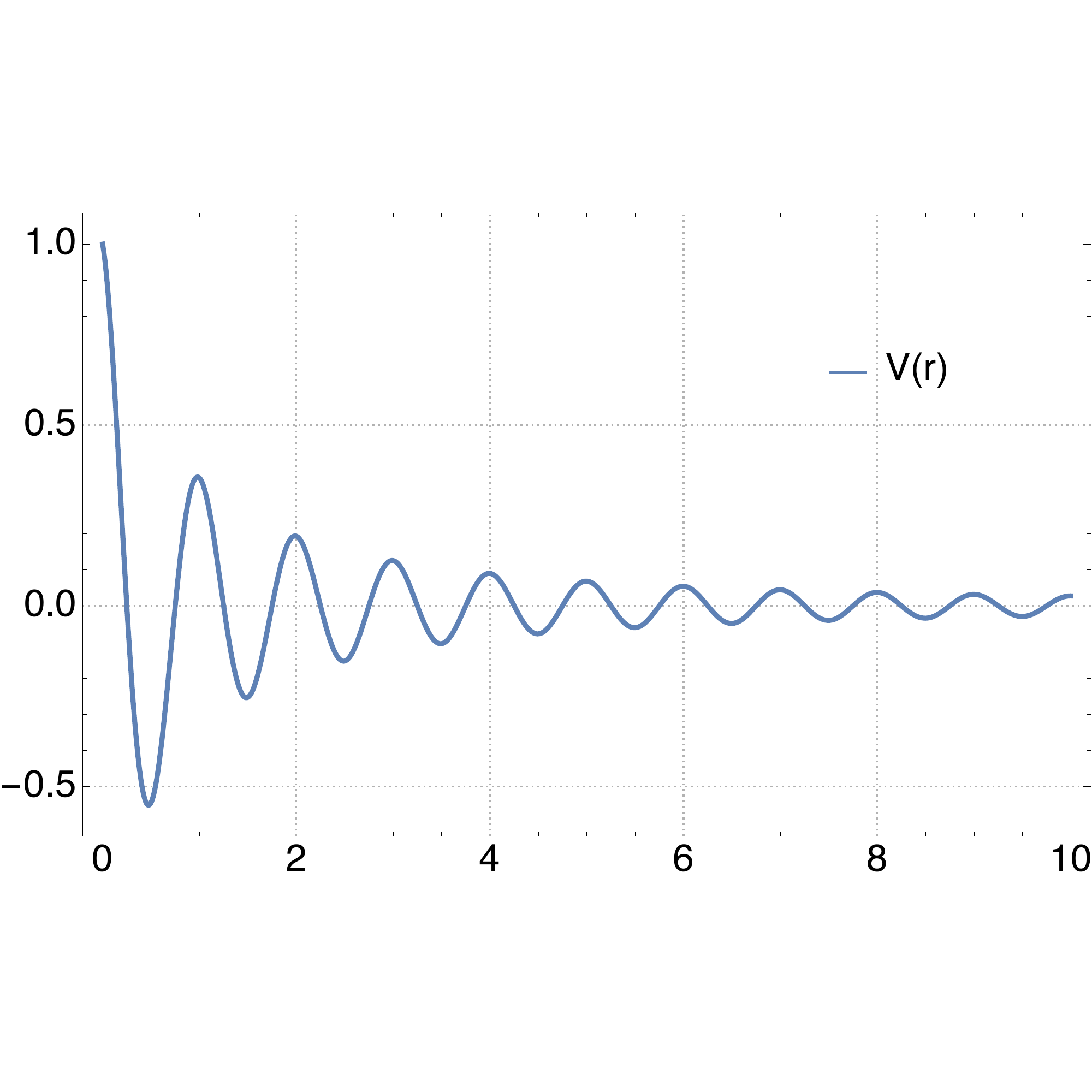}
\caption{The potential $V$}
\label{fig:plotVPowerSin}
\end{subfigure}
\begin{subfigure}[b]{\sizefigure\textwidth}
\includegraphics[width=\textwidth]{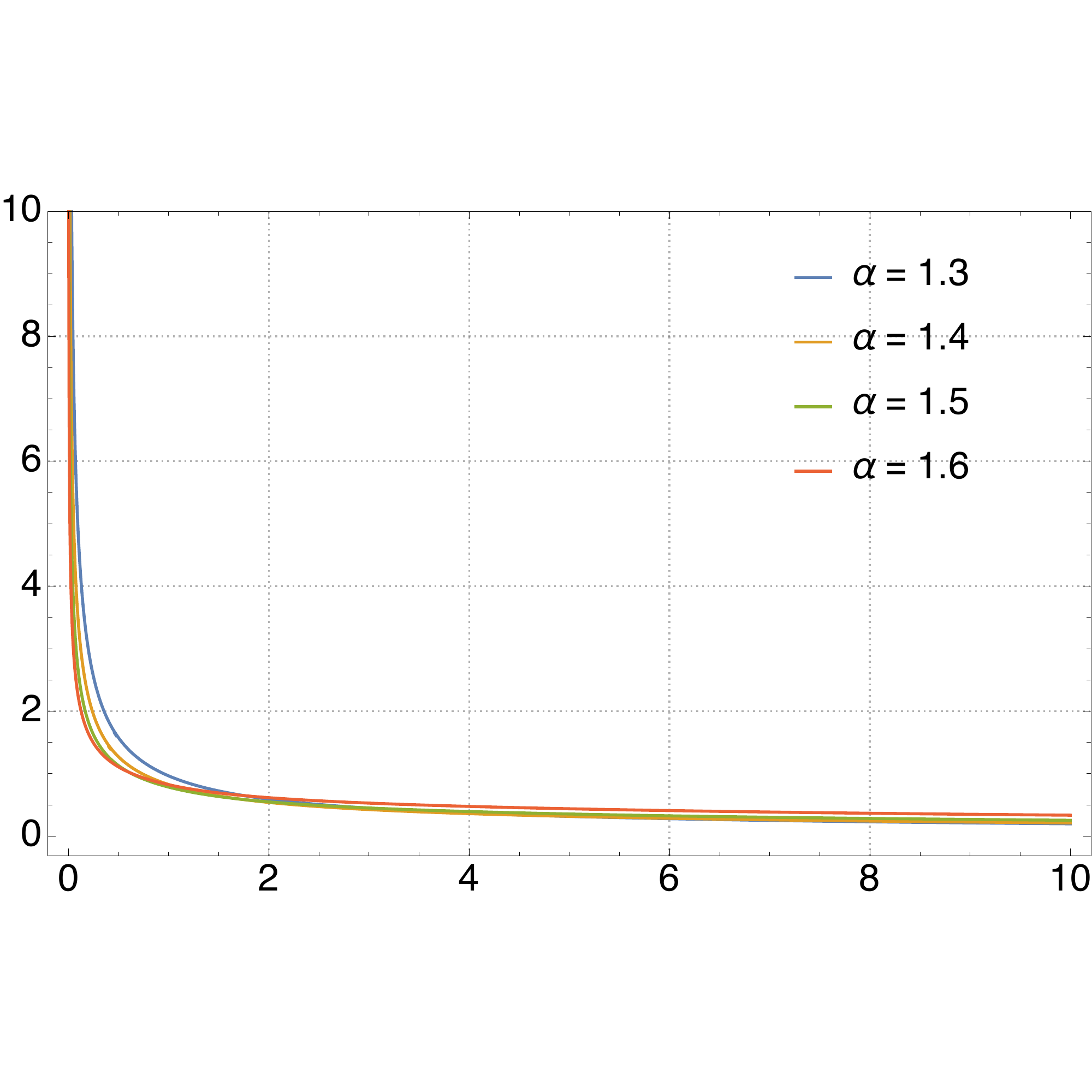}
\caption{$\phi$ defined in \eqref{eq:phiH}}
\label{fig:plotPhiPowerSin}
\end{subfigure}
\begin{subfigure}[b]{\sizefigure\textwidth}
\includegraphics[width=\textwidth]{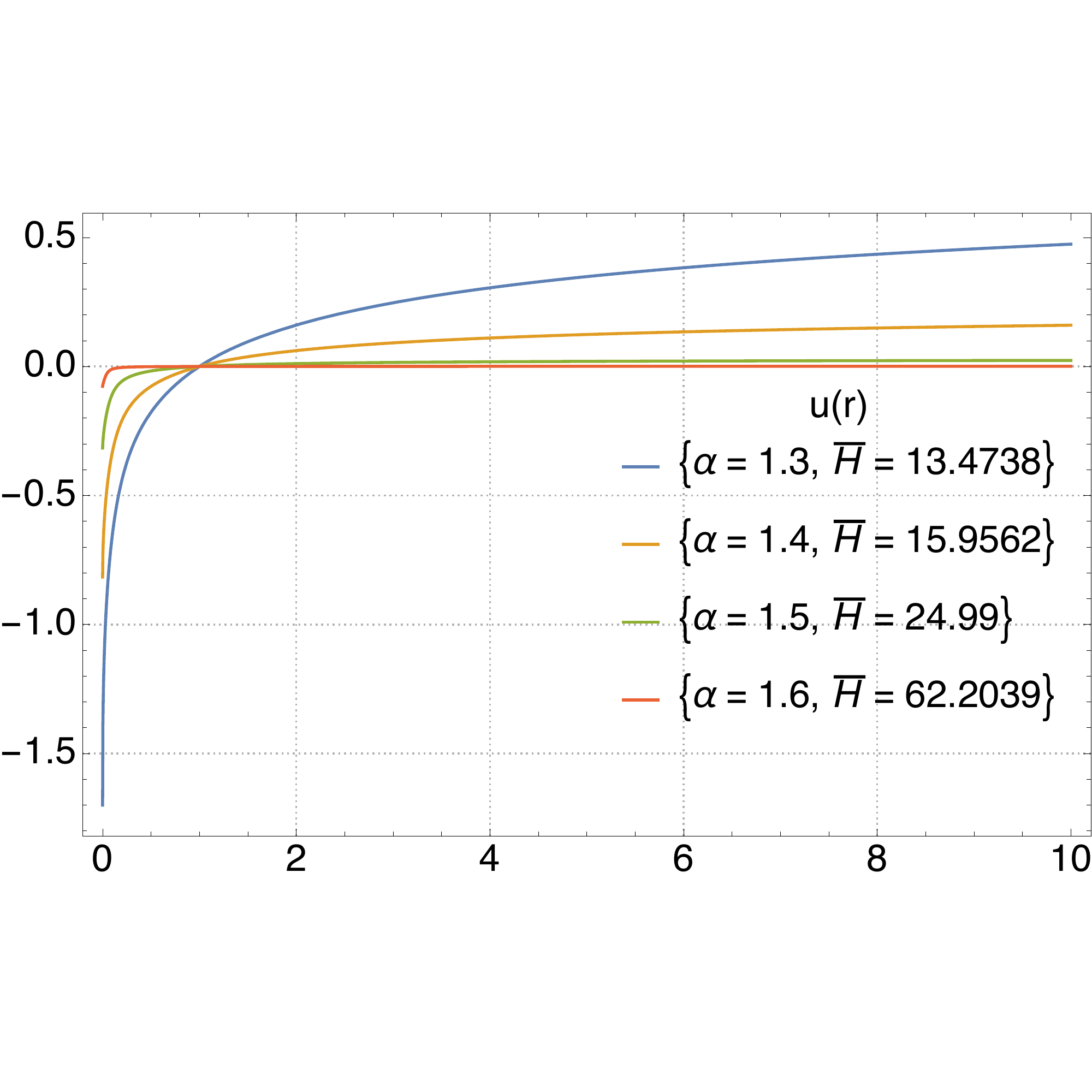}
\caption{The value function $u$}
\label{fig:plotuPowerSin}
\end{subfigure}
\begin{subfigure}[b]{\sizefigure\textwidth}
\includegraphics[width=\textwidth]{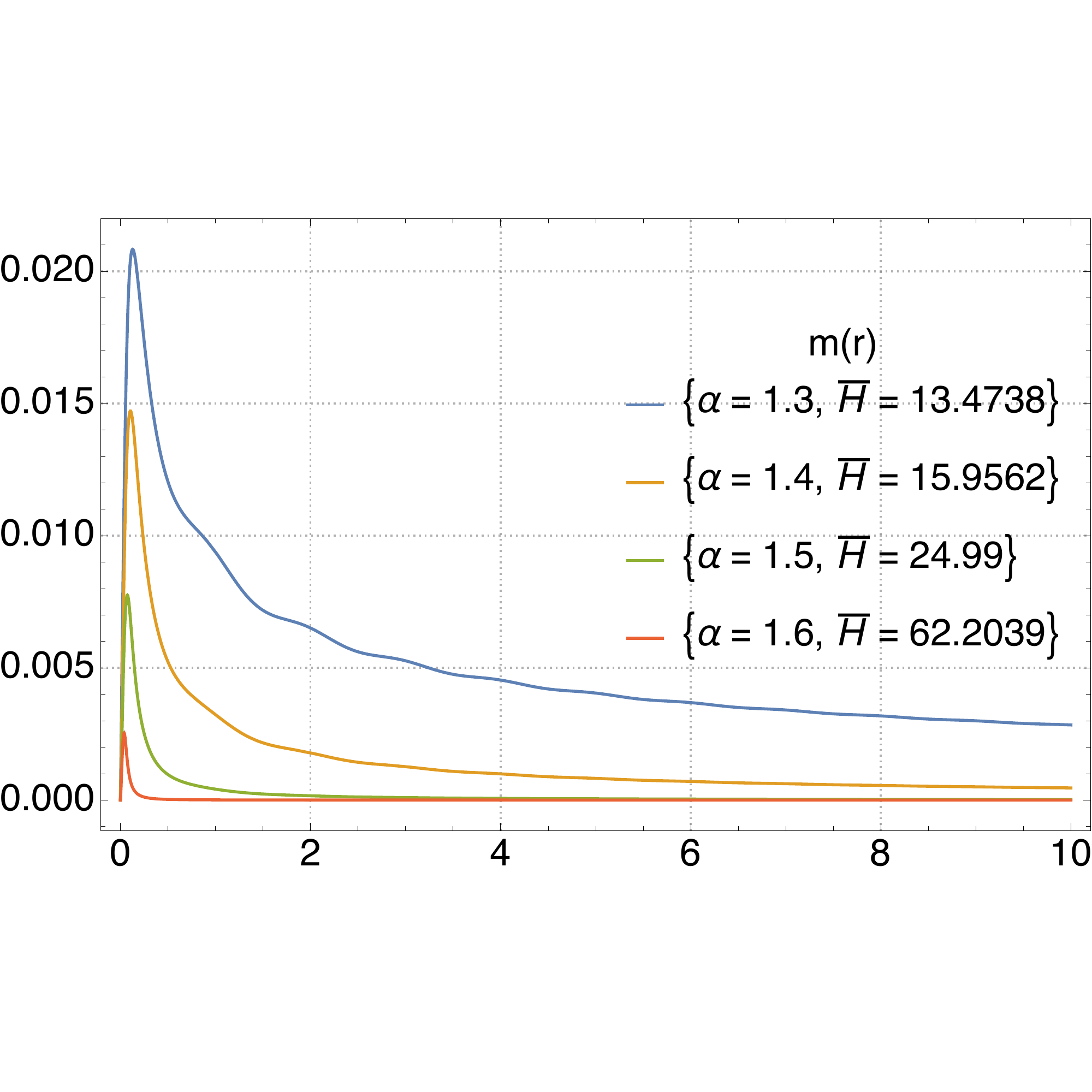}
\caption{The density $m$}
\label{fig:plotmPowerSin}
\end{subfigure}
~ 
\caption{Numerical solution of Problem \ref{P1} for $d=2$, $g(m) = m$, $j=1$, $V(x)=(1+|x|)^{-\frac 3 2}\sin(\pi \left(|x|+\frac{1}{4})\right) $ and  $\alpha\in\{1.3, 1.4, 1.5, 1.6\}$.}\label{fig:solFirstOrderMFGPowerSin}
\end{figure}

In Figure \ref{fig:solFirstOrderMFG}, we see that for a fast decaying potential and a non-zero radial current, we have localized solutions near the origin. The radial decay of the solution is expected because the total mass is $1$ and the agents are moving away from the origin. A similar behavior can be observed in Figure \ref{fig:solFirstOrderMFGPowerSin} for a different potential. We also see that the oscillations in the potential do not affect substantially the solution. Finally, we depict a 2D solution 
in Figure \ref{fig:solFirstOrderMFGPowerSinR}.

\section{Second-order case}

Here, we study radially symmetric solutions of Problem \ref{P2}. As before, we assume that $u,m$ are radial as in \eqref{radialvar}. Accordingly, Problem \ref{P2} takes the form
\begin{align}\label{mainradialso}
\begin{cases}
&-u''(r)-\frac{d-1}{r}u'(r)+\frac{u'(r)^2}{2 m(r)^{\alpha}}+V(r)\\
&\qquad =g(m(r))+\Hh\ \mbox{for}\ r\in\Omega^*\\
&\\
&-m''(r)-\frac{d-1}{r}m'(r)\\
&\quad -m(r)^{1-\alpha}\left(u''(r)+\frac{d-1}{r}u'(r)\right)\\
&\quad -(1-\alpha)m(r)^{-\alpha}u'(r)m'(r)=0 \ \mbox{for}\ r\in\Omega^*\\
&\\
&m(r)>0,\ \mbox{for}\ r\in \Omega^*,\\
& \int\limits_{0}^\infty r^{d-1}m(r)dr=\frac{1}{|\partial B_1|}.
\end{cases}
\end{align}
\begin{ART}
	{"radial coordinates second order in two dimensions", 
MKF := Function[#1, #2] &;
PDEtoRadial[PDE_, DepVars_, IndVars_] :=
Module[{pdeop}, 
pdeop = MKF[DepVars, PDE];
First@Assuming[r > 0, 
pdeop @@ (MKF[IndVars, #@Sqrt[Total[#^2 & /@ IndVars]]] & /@ 
DepVars) /. 
Solve[Total[#^2 & /@ IndVars] == r^2, First[IndVars]] // 
Simplify]
];
RadialSolutionsPDE[PDE_, DepVars_, IndVars_] :=
DSolve[# == 0 & /@ PDEtoRadial[PDE, DepVars, IndVars], # @@ {r} & /@
DepVars, {r}];
PDEtoRadial[
{-Laplacian[
	u[x, y], {x, y}] + (D[u[x, y], x]^2 + 
	D[u[x, y], y]^2)/(2 m[x, y]^\[Alpha]) + V[Sqrt[x^2 + y^2]] - 
	g[ m[x, y]] - H, -Laplacian[m[x, y], {x, y}] -
	Div[{D[u[x, y], x] m[x, y]^(1 - \[Alpha]), 
		D[u[x, y], y] m[x, y]^(1 - \[Alpha])}, {x, y}]}, {u, m}, {x, 
	y}] // Expand
	}
\end{ART}

As in the first-order case, the Fokker-Planck equation in \eqref{mainradialso} is integrable.
\begin{pro}\label{pro:reductionelliptic}
	Suppose that $\Omega=\Rr^d$ or $\Omega=\Rr^d\setminus\{0\}$. Furthermore, assume that $(u,m,\Hh)\in C^{\infty}(\Omega)\times C^{\infty}(\Omega) \times \Rr$ is a radially symmetric solution for \eqref{mainso}; that is, \eqref{radialvar} holds. Then, there exists a constant $j \in \Rr$ such that
	\begin{equation}\label{eq:reductionelliptic}
	j=u'(r)m(r)^{1-\alpha}r^{d-1}+m'(r)r^{d-1}\quad \mbox{for all}\quad r\in \Omega^*.
	\end{equation}
\end{pro}
\begin{proof}
	We just note that
	\begin{align*}
&\left(u'(r)m(r)^{1-\alpha}r^{d-1}+m'(r)r^{d-1}\right)'\\
&\quad =r^{d-1}\Big[m''(r)+\frac{d-1}{r}m'(r)\\
&\quad +m(r)^{1-\alpha}\left(u''(r)+\frac{d-1}{r}u'(r)\right)\\
&\quad +(1-\alpha)m(r)^{-\alpha}u'(r)m'(r)\Big]=0.
	\end{align*}
\begin{ART}	
{"we check the identity in the proof of proposition pro:reductionelliptic - zero is the answer!",
-r^(d - 1) (-(((-1 + d) Derivative[1][m][r])/
r) - ((-1 + d) m[r]^(1 - \[Alpha]) Derivative[1][u][r])/r - 
m[r]^-\[Alpha] Derivative[1][m][r] Derivative[1][u][
r] + \[Alpha] m[r]^-\[Alpha] Derivative[1][m][
r] Derivative[1][u][r] - m''[r] - m[r]^(1 - \[Alpha]) u''[r]) -
D[u'[r] m[r]^(1 - \[Alpha]) r^(d - 1) + m'[r] r^(d - 1), 
r] // FullSimplify
	}	
\end{ART}	
\end{proof}
Using the previous proposition, we reduce \eqref{mainradialso} to a single second-order ODE, which is linear in the highest order derivative.
\begin{proposition}\label{pro:ODErho}
	Suppose that $\Omega=\Rr^d$ or $\Omega=\Rr^d\setminus\{0\}$. Furthermore, assume that $(u,m,\Hh)\in C^{\infty}(\Omega)\times C^{\infty}(\Omega) \times \Rr$ is a radially symmetric solution for \eqref{mainso}. Then, the function $r\mapsto\rho(r)=m(r)^{\alpha+\frac{1}{2}},\ r\in \Omega^*$ (see \ref{eq:omega*} for $\Omega^*$) satisfies the following ODE:
	\begin{align}\label{eq:ODErhoj}
	\begin{split}
	&\rho''(r)+\rho'(r)\left(\frac{d-1}{r}-\frac{\alpha j r^{1-d}}{\rho(r)^{\frac{2}{2\alpha+1}}}\right)\\
	=&\left(\alpha+\frac{1}{2}\right)\left(g\left(\rho(r)^{\frac{2}{2\alpha+1}}\right)+\Hh-V(r)\right)\rho(r)^{\frac{1}{2\alpha+1}}\\
	&-\frac{(2\alpha+1)j^2r^{2-2d}\rho(r)^{\frac{2\alpha-3}{2\alpha+1}}}{4},\quad r\in \Omega^*.
	\end{split}
	\end{align}
	Furthermore, if $\alpha=0$ the previous ODE takes the form
	\begin{align}\label{eq:ODErhoa=0}
	\begin{split}
	&\rho''(r)+\rho'(r)\frac{d-1}{r}=\frac{1}{2}\left(g\left(\rho(r)^2\right)\right.\\
	&\left.\hspace{1cm}+\Hh-V(r)\right)\rho(r)-\frac{j^2r^{2-2d}}{4\rho(r)^3},\quad r\in \Omega^*.
	\end{split}
	\end{align}
	Moreover, \eqref{eq:ODErhoa=0} is the Euler-Lagrange equation of the optimization problem
	\begin{align}\label{eq:vara=0}
	\begin{split}
	&\inf\limits_{\rho} \int\limits_{\Omega^*} r^{d-1} \left(\rho'(r)^2+\frac{G(\rho(r)^2)}{2}\right.\\
	&\left.-\frac{V(r)\rho(r)^2}{2}+\frac{j^2 r^{2-2d}}{4\rho(r)^2}\right)dr,
		\end{split}
		\end{align}
		where $G$ is the antiderivative of $g$, and $j\in \Rr$ is the constant from Proposition \ref{pro:reduction}, 	
	under the constraint
	\[
	\int\limits_{\Omega^*} \rho(r)^2 r^{d-1}dr=\frac{1}{|\partial B_1|}.
	\]
 Additionally, \eqref{eq:vara=0} can be written in terms of $m$ as
	\begin{align}\label{eq:vara=0m}
	\begin{split}
	&\inf\limits_{m} \int\limits_{\Omega^*} r^{d-1} \left(\frac{m'(r)^2}{4m(r)}+\frac{G(m(r))}{2}\right.\\
	&\left.-\frac{V(r)m(r)}{2}+\frac{j^2 r^{2-2d}}{4 m(r)}\right)dr\\
	&\int\limits_{\Omega^*} m(r) r^{d-1}dr=\frac{1}{|\partial B_1|}.
	\end{split}
	\end{align}
	If $j=0$, \eqref{eq:ODErhoj} takes the form
	\begin{align}\label{eq:ODErhoj=0}
	\begin{split}
	&\rho''(r)+\rho'(r)\frac{d-1}{r}=\left(\alpha+\frac{1}{2}\right)\Big[g\left(\rho(r)^{\frac{2}{2\alpha+1}}\right)\\
	&\hspace{3cm}+\Hh-V(r)\Big]\rho(r)^{\frac{1}{2\alpha+1}}.
	\end{split}
	\end{align}
	Furthermore, \eqref{eq:ODErhoj=0} is the Euler-Lagrange equation of the optimization problem
	\begin{align}\label{eq:varj=0}
	\begin{split}
	&\inf\limits_{\rho} \int\limits_{\Omega^*} r^{d-1} [\rho'(r)^2+(2\alpha+1)G_1(\rho(r))\\
	&\hspace{1cm}+\frac{(2\alpha+1)^2}{2(\alpha+1)}\rho(r)^\frac{2\alpha+2}{2\alpha+1}(\Hh-V(r))]dr
	\end{split}
	\end{align}
	where $\rho\mapsto G_1(\rho)$ is the antiderivative of the map $\rho \mapsto g\left(\rho^{\frac{2}{2\alpha+1}}\right)\rho^{\frac{1}{2\alpha+1}}$.
\end{proposition}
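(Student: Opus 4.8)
The plan is to eliminate $u$ from \eqref{mainradialso} using the conserved current of Proposition \ref{pro:reductionelliptic}, reduce the system to a single second‑order ODE for $m$, and then verify that the substitution $\rho=m^{\alpha+\frac12}$ turns that ODE into \eqref{eq:ODErhoj}. First I would solve \eqref{eq:reductionelliptic} for the derivative of the value function, $u'(r)=m(r)^{\alpha-1}\bigl(jr^{1-d}-m'(r)\bigr)$, and rewrite the diffusion terms of the Hamilton--Jacobi equation as $-u''-\frac{d-1}{r}u'=-r^{1-d}\bigl(r^{d-1}u'\bigr)'$, where $r^{d-1}u'=m^{\alpha-1}\bigl(j-r^{d-1}m'\bigr)$. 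Differentiating this and substituting it, together with $\frac{(u')^2}{2m^\alpha}=\frac12 m^{\alpha-2}\bigl(jr^{1-d}-m'\bigr)^2$, into the first line of \eqref{mainradialso} and collecting terms yields a second‑order ODE for $m$ alone whose only terms are $m^{\alpha-1}m''$, $\frac{d-1}{r}m^{\alpha-1}m'$, $\bigl(\alpha-\frac12\bigr)m^{\alpha-2}(m')^2$, $-\alpha j\,r^{1-d}m^{\alpha-2}m'$, $\frac12 j^2 r^{2-2d}m^{\alpha-2}$ and $V-g(m)-\Hh$.

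Next I would carry out the change of variables $m=\rho^{2/(2\alpha+1)}$, i.e.\ $\rho=m^{\alpha+\frac12}$, expressing $m'$ and $m''$ through $\rho,\rho',\rho''$. The decisive observation is that, with $\kappa=\alpha+\frac12$, the $(\rho')^2$ contributions coming from the $m^{\alpha-1}m''$ term and from the $\bigl(\alpha-\frac12\bigr)m^{\alpha-2}(m')^2$ term have coefficients $\frac{1-\kappa}{\kappa^2}$ and $\frac{\kappa-1}{\kappa^2}$, and therefore cancel exactly; this is precisely why the exponent is $\alpha+\frac12$, and it leaves an equation that is linear in $\rho''$. Multiplying through by $\kappa\rho^{\,1-\alpha/\kappa}$ and simplifying the resulting powers of $\rho$ (using $1-\alpha/\kappa=\frac1{2\kappa}$, $1-\frac2\kappa=\frac{2\alpha-3}{2\alpha+1}$, $\frac\kappa2=\frac{2\alpha+1}{4}$) produces exactly \eqref{eq:ODErhoj}. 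Setting $\alpha=0$ gives \eqref{eq:ODErhoa=0} and setting $j=0$ gives \eqref{eq:ODErhoj=0}.

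For the variational statements I would write the Lagrangian of each functional and compute its Euler--Lagrange equation. For \eqref{eq:vara=0}, with $L=r^{d-1}\bigl(\rho'^2+\frac12 G(\rho^2)-\frac12 V\rho^2+\frac14 j^2 r^{2-2d}\rho^{-2}\bigr)$, one has $\frac{d}{dr}\partial_{\rho'}L=2r^{d-1}\bigl(\rho''+\frac{d-1}{r}\rho'\bigr)$, while $\partial_\rho L$ reproduces the right-hand side of \eqref{eq:ODErhoa=0} except for a term $2\lambda\rho\,r^{d-1}$ coming from the Lagrange multiplier $\lambda$ of the mass constraint $\int_{\Omega^*}\rho^2 r^{d-1}=\frac1{|\partial B_1|}$; matching forces $\lambda=\frac{\Hh}2$, so that $\Hh$ is exactly this multiplier. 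The form \eqref{eq:vara=0m} follows from \eqref{eq:vara=0} through $\rho=m^{1/2}$, $\rho'^2=\frac{(m')^2}{4m}$. For \eqref{eq:varj=0}, where $\Hh$ appears explicitly in the integrand and no constraint is imposed, the same Euler--Lagrange computation, using $G_1'(\rho)=g\bigl(\rho^{2/(2\alpha+1)}\bigr)\rho^{1/(2\alpha+1)}$ and $\frac{(2\alpha+1)^2}{2(\alpha+1)}\cdot\frac{2\alpha+2}{2\alpha+1}=2\alpha+1$, yields \eqref{eq:ODErhoj=0}.

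The main obstacle is the bookkeeping in the second step: one must check that all the nonlinear first-order terms assemble into exactly the coefficients displayed in \eqref{eq:ODErhoj}, the key point being the exact cancellation of the $(\rho')^2$ terms, which is what makes the substitution $\rho=m^{\alpha+\frac12}$ work; by comparison, the elimination of $u$ and the Euler--Lagrange verifications are routine. A useful sanity check is that \eqref{eq:ODErhoj} specializes correctly to \eqref{eq:ODErhoa=0} at $\alpha=0$ (where $\frac{2}{2\alpha+1}=2$ and $\frac\kappa2=\frac14$), which is what the symbolic-computation cells in the paper confirm.
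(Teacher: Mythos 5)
Your proposal is correct and follows essentially the same route as the paper: solve the integrated Fokker--Planck relation \eqref{eq:reductionelliptic} for $u'$, substitute into the Hamilton--Jacobi equation to get a second-order ODE for $m$ (the paper's \eqref{eq:ODEm}), observe that $\rho=m^{\alpha+\frac12}$ is exactly the power for which the $(\rho')^2$ (equivalently, $(m')^2$) contributions collapse via $\tfrac{(m^\sigma)''}{\sigma m^\sigma}=\tfrac{m''}{m}+(\sigma-1)\tfrac{(m')^2}{m^2}$, and verify the variational claims by direct Euler--Lagrange computation. Your write-up is in fact slightly more complete than the paper's (which dismisses the variational verifications as ``straightforward calculations'' checked symbolically), and it correctly states $u'=m^{\alpha-1}(jr^{1-d}-m')$, fixing a typo in the paper's displayed formula, and correctly identifies $\Hh$ as the Lagrange multiplier of the mass constraint in \eqref{eq:vara=0}.
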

\begin{proof}
	From Proposition \ref{pro:reductionelliptic}, we have that $u$ and $m$ satisfy \eqref{eq:reductionelliptic}. Therefore, we obtain
	\[u'(r)=j (r^{1-d}-m'(r))m(r)^{\alpha-1},\quad r\in \Omega^*.
	\]
\begin{ART}
{"Solving u' in terms of j", 
Solve[u'[r] m[r]^(1 - \[Alpha]) r^(d - 1) + m'[r] r^(d - 1) == j, 
u'[r]]	
}	
\end{ART}			
	We plug-in this expression in the first equation of \eqref{mainradialso} and after elementary manipulations obtain
	\begin{align}\label{eq:ODEm}
	\begin{split}
	&\frac{m''(r)}{m(r)}+\left(\alpha-\frac{1}{2}\right)\frac{m'(r)^2}{m(r)^2}\\
	&+\frac{m'(r)}{m(r)}\left(\frac{d-1}{r}-\frac{\alpha jr^{1-d}}{m(r)}\right)+\frac{j^2 r^{2-2d}}{2m(r)^2}\\ 
	=&\left(g(m(r))+\Hh-V(r)\right)m(r)^{-\alpha},\quad r\in \Omega^*.
	\end{split}
	\end{align}
\begin{ART}
{"replacing u' and u'' in the Hamilton-Jacobi equation", 
-H - g[m[r]] + V[r] - ((d - 1) Derivative[1][u][r])/r + 
1/2 m[r]^-\[Alpha] Derivative[1][u][r]^2 - (u^\[Prime]\[Prime])[
r] /. Derivative[1][u][
r] -> -r^-d m[
r]^(-1 + \[Alpha]) (-j r + r^d Derivative[1][m][r]) /. 
Derivative[1][u]'[r] -> 
D[-r^-d m[r]^(-1 + \[Alpha]) (-j r + r^d Derivative[1][m][r]), 
r] // Simplify		
	}	
\end{ART}	
	Next, for any $\sigma\in \Rr\setminus\{0\}$ we have that
	\begin{align*}
	&\frac{(m(r)^\sigma)'}{\sigma m(r)^\sigma}=\frac{m'(r)}{m(r)}\\
	&\frac{(m(r)^\sigma)''}{\sigma m(r)^\sigma}=\frac{m''(r)}{m(r)}+(\sigma-1)\frac{m'(r)^2}{m(r)^2}.
	\end{align*}
	Therefore, if we choose $\sigma=\alpha+\frac{1}{2}$ and denote by $\rho(r)=m(r)^{\alpha+\frac{1}{2}}$ we obtain \eqref{eq:ODErhoj} from \eqref{eq:ODEm}.
\begin{ART}
{"transforming the hamilton-jacobi equation using \rho", 
Function[m, -H - g[m[r]] + 1/2 j^2 r^(2 - 2 d) m[r]^(-2 + \[Alpha]) + 
V[r] - j r^(1 - d) \[Alpha] m[r]^(-2 + \[Alpha]) Derivative[1][m][
r] - (m[r]^(-1 + \[Alpha]) Derivative[1][m][r])/
r + (d m[r]^(-1 + \[Alpha]) Derivative[1][m][r])/r - 
1/2 m[r]^(-2 + \[Alpha]) Derivative[1][m][r]^2 + \[Alpha] m[
r]^(-2 + \[Alpha]) Derivative[1][m][r]^2 + 
m[r]^(-1 + \[Alpha]) (m'')[r]][
Function[r, (\[Rho][r])^(1/(\[Alpha] + 1/2))]] // FullSimplify
}
\end{ART}	
The remaining assertions follow by straightforward calculations.
\begin{ART}
{"solving the Hamilton-Jacobi equation for \rho'' and simplifying",
Assuming[Element[\[Alpha], Integers], 
FullSimplify[
Solve[1/(2 (1 + 2 \[Alpha])) r^(-1 - 2 d) \[Rho][
r]^(-1 - 
4/(1 + 2 \[Alpha])) (-2 r^(1 + 2 d) (1 + 2 \[Alpha]) (H + 
g[\[Rho][r]^(1/(1/2 + \[Alpha]))] - V[r]) \[Rho][
r]^(1 + 4/(1 + 2 \[Alpha])) + 
j^2 r^3 (1 + 2 \[Alpha]) \[Rho][
r] (\[Rho][r]^(1/(1/2 + \[Alpha])))^\[Alpha] - 
4 j r^(2 + 
d) \[Alpha] (\[Rho][
r]^(1/(1/2 + \[Alpha])))^(1 + \[Alpha]) Derivative[
1][\[Rho]][r] + 
4 r^(2 d) \[Rho][
r]^(4/(1 + 2 \[Alpha])) (\[Rho][
r]^(1/(1/2 + \[Alpha])))^\[Alpha] ((-1 + 
d) Derivative[1][\[Rho]][r] + r (\[Rho]'')[r])) == 
0, (\[Rho]'')[r]]]]
	}
\end{ART}
\begin{ART}
{"Checking the variational derivative of the functional in terms of \rho", 
Needs["VariationalMethods`"];
VariationalD[
r^(d - 1) (\[Rho]'[r]^2 + G[\[Rho][r]^2]/2 - V[r] \[Rho][r]^2/2 + 
j^2 r^(2 - 2 d)/(4 \[Rho][r]^2)), \[Rho][r], 
r]/(2 r^(d - 1)) // Expand	
	}
\end{ART}
\begin{ART}
{"The last variational principle", 
	VariationalD[
	r^(d - 1) (\[Rho]'[
	r]^2 + (2 \[Alpha] + 1) G[\[Rho][
	r]] + (2 \[Alpha] + 1)^2/ (2 (\[Alpha] + 1)) \[Rho][
	r]^((2 \[Alpha] + 2)/(2 \[Alpha] + 1)) (H - V (r))), \[Rho][
	r], r]/(2 r^(-1 + d)) // Expand
}
\end{ART}

\end{proof}
\begin{figure}[htb]
\centering
\begin{subfigure}[b]{\sizefigure\textwidth}
\includegraphics[width=\textwidth]{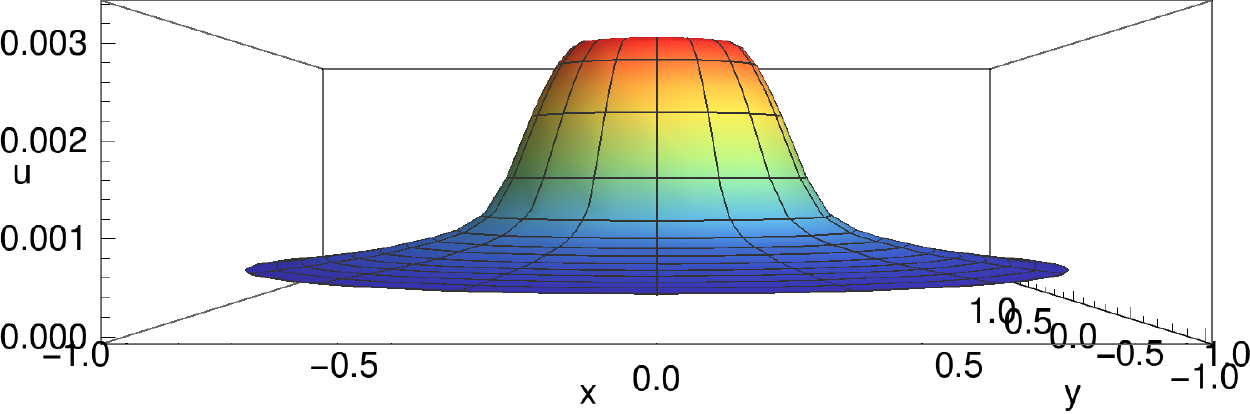}
\caption{Density $m$}
\label{fig:Plot3Dma3Front}
\end{subfigure}
\begin{subfigure}[b]{\sizefigure\textwidth}
\includegraphics[width=\textwidth]{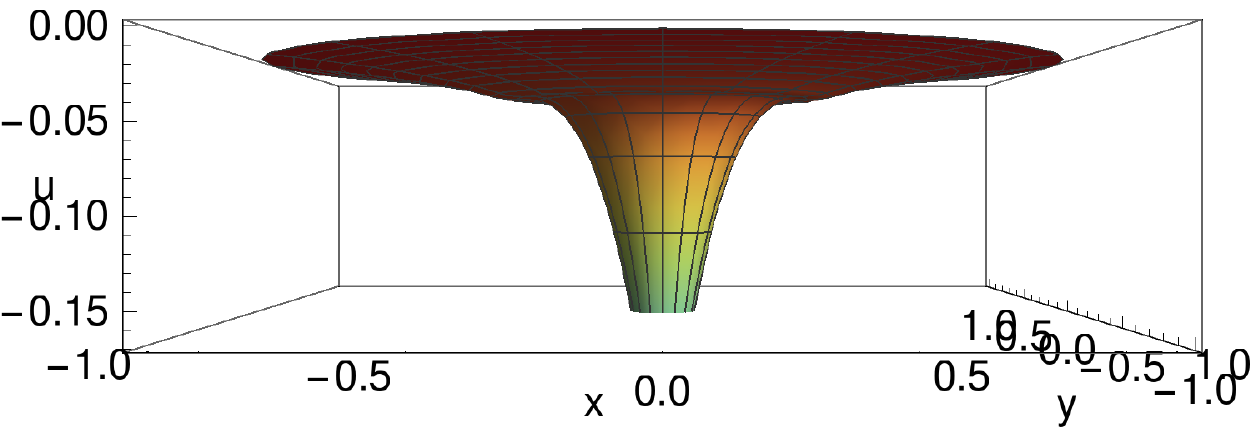}
\caption{Value function $u$}
\label{fig:Plot3Dma3Above}
\end{subfigure}
\begin{subfigure}[b]{\sizefigure\textwidth}
\includegraphics[width=\textwidth]{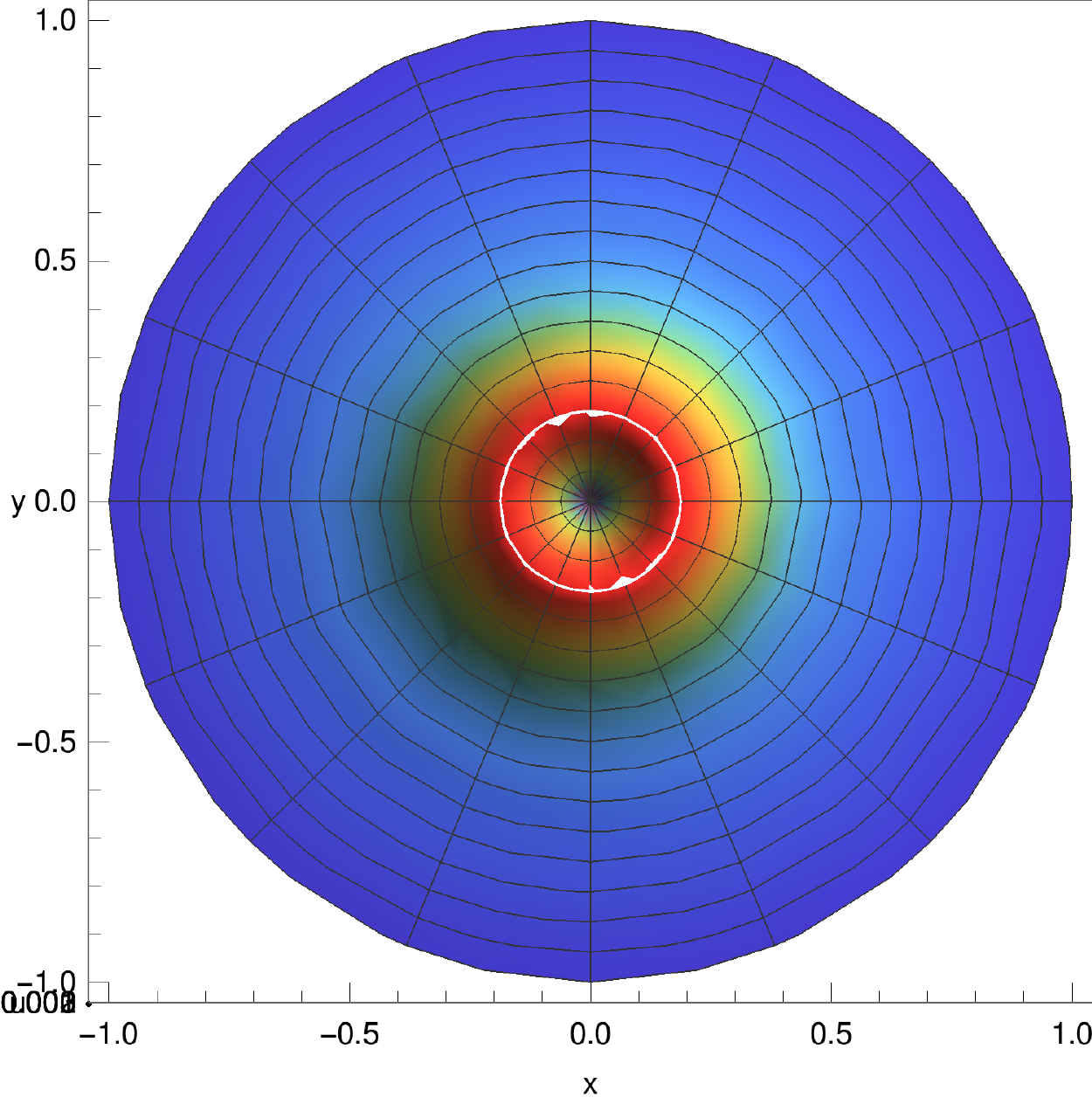}
\caption{Density $m$}
\label{fig:Plot3Dua3Front}
\end{subfigure}
\begin{subfigure}[b]{\sizefigure\textwidth}
\includegraphics[width=\textwidth]{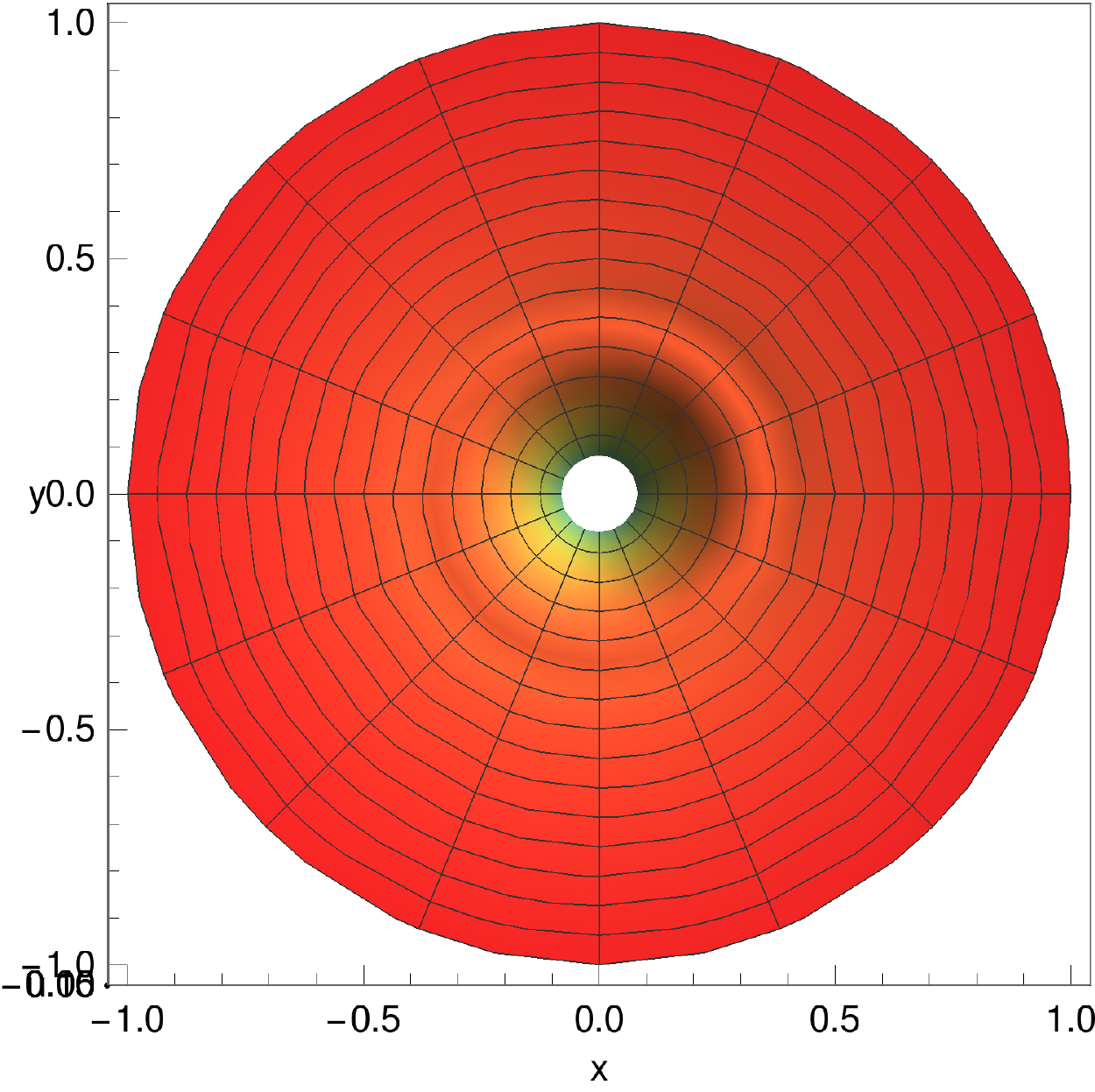}
\caption{Value function $u$}
\label{fig:PlotDua3Above}
\end{subfigure}
~ 
\caption{Numerical solution of Problem \ref{P1} for $d=2$, $g(m) = m$, $j=1$, $V(x)=e^{-|x|^2/2}\sin(\pi \left(|x|+\frac{1}{4})\right) $ and  $\alpha=1.3$.}\label{fig:solFirstOrderMFGPowerSinR}
\end{figure}
\begin{remark}
	The reason \eqref{eq:ODErhoj} admits a variational formulation when $\alpha=0$ or $j=0$ is that the coefficient of $\rho'(r)$ in \eqref{eq:ODErhoj} does not depend on $\rho(r)$ for this choice of parameters.
\end{remark}

\section{Final remarks}

Here, we study new examples of MFGs that can be solved explicitly. These examples 
are relevant to the understanding of qualitative features of MFGs and for the validation of numerical methods. 
For first-order problems, our solutions are explicit, up to the computation of integrals and the solution of algebraic equations. Further, 
they give insight on properties of the potential that are needed for the existence of a
global solution. For example, the condition \eqref{eq:int=1} imposes an asymptotic condition on $V$
without which solutions will not exist. For second-order equations, we construct new reduced models
and variational principles. These variational principles may prove useful in the numerical computation of
solutions, an issue that we plan to address in the future. 

Naturally, many questions remain open, in particular, because our models do not fit the standard theory for mean-field games in a compact state space. 
For example, are the solutions computed here stable under small perturbations? Do we have long-time convergence to stationary solutions? Are the conditions in \eqref{eq:int=1} sufficient for the existence of a solution in the non-radial case? All of these are challenging questions that we hope will be answered in the  near future.

\bibliographystyle{alpha}

\def\polhk#1{\setbox0=\hbox{#1}{\ooalign{\hidewidth
			\lower1.5ex\hbox{`}\hidewidth\crcr\unhbox0}}} \def\cprime{$'$}

\end{document}